\theoremstyle{plain}
\declaretheorem[title=Theorem, parent=section]{theorem}
\declaretheorem[title=Lemma,sibling=theorem]{lemma}
\declaretheorem[title=Proposition,sibling=theorem]{proposition}
\theoremstyle{definition}
\declaretheorem[title=Remark,sibling=theorem]{remark}
\declaretheorem[title=Remark, numbered=no]{remark*}
\declaretheorem[title=Assumption, numbered=no]{assumption*}
\numberwithin{equation}{section}
\newcommand{\R}{\mathbb{R}}
\newcommand{\cM}{\mathcal{M}}
\newcommand{\eps}{\varepsilon}
\newcommand{\average}{{\mathchoice {\kern1ex\vcenter{\hrule height.4pt
width 6pt depth0pt} \kern-9.7pt} {\kern1ex\vcenter{\hrule
height.4pt width 4.3pt depth0pt} \kern-7pt} {} {} }}
\begin{document}
\allowdisplaybreaks
 \title[$L^p$ estimates for the Laplacian via blow-up]{$L^p$ estimates for the Laplacian via blow-up}

\author{Jan Lewenstein-Sanpera}
\author{Xavier Ros-Oton}

\address{Universitat de Barcelona, Gran Via de les Corts Catalanes 585, 08007 Barcelona, Spain}
\email{jlewensa7@alumnes.ub.edu}

\address{ICREA, Pg. Llu\'is Companys 23, 08010 Barcelona, Spain \& Universitat de Barcelona, Departament de Matem\`atiques i Inform\`atica, Gran Via de les Corts Catalanes 585, 08007 Barcelona, Spain \& Centre de Recerca Matem\`atica, Barcelona, Spain}
\email{xros@icrea.cat}

\keywords{Calder\'on-Zygmund estimates, elliptic equations, parabolic equations}

\subjclass[2020]{35B65, 35J05, 35K05}

\allowdisplaybreaks

\begin{abstract}
In this note we provide a new proof of the $W^{2,p}$ \textit{Calder\'on-Zygmund} regularity estimates for the Laplacian, i.e., $\Delta u=f$ and its parabolic counterpart $\partial_t u-\Delta u=f$. Our proof is an adaptation of a contradiction and compactness argument that so far had been only used to prove estimates in H\"older spaces.
This new approach is simpler than previous ones, and avoids the use of any interpolation theorem.
\end{abstract}
 
\allowdisplaybreaks

\maketitle

\section{Introduction}

The main concern for this article is the following classical theorem: 
\begin{theorem}[\cite{CZ52}]
 \label{th: calderon-zygmund theorem}
    Let $1<p<\infty$, and $u\in H^1(B_{1})$ be any weak solution to:
    \begin{equation*}
        \Delta u = f \quad in \quad B_{1},
    \end{equation*}
    with $f\in L^{p}(B_{1})$. Then u is in $W^{2,p}$ inside $B_{1}$ and the following estimate holds:
    \begin{equation}
        \int_{B_{1/2}}|D^{2}u|^{p} \leq C\left(\int_{B_1} |u|^{p} +\int_{B_1}|f|^{p}\right).
        \label{eq: lp_elliptic_estimate}
    \end{equation}
The constant $C$ depends only on $n$ and $p$.
\end{theorem}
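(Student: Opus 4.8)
The plan is to run the contradiction–compactness (``harmonic approximation'') scheme that is classically used to produce $C^{2,\alpha}$ estimates, and then to turn the resulting approximation property into an $L^p$ bound by a measure–theoretic iteration over dyadic scales; this replaces the Calderón–Zygmund decomposition and the Marcinkiewicz interpolation of the original proof. As a preliminary reduction, it suffices to prove \eqref{eq: lp_elliptic_estimate} with a uniform constant for $f\in C^\infty(\overline{B_1})$ and $u$ smooth: for general $f\in L^p(B_1)$ one mollifies, solves the corresponding Dirichlet problems, and passes to the limit, and writing $u=(\text{smooth harmonic part})+N*f$ with $N$ the Newtonian kernel gives both the quantitative estimate and the qualitative statement $u\in W^{2,p}_{\mathrm{loc}}(B_1)$. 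Already here it matters that $N*f\in L^p_{\mathrm{loc}}$ for $f\in L^p$, which is immediate from Young's inequality since $|x|^{2-n}\in L^1_{\mathrm{loc}}$.

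The core step is a harmonic approximation lemma: for every $\eta>0$ there is $\delta=\delta(\eta,n,p)>0$ such that, if $\Delta u=f$ in $B_1$ with $\dashint_{B_1}|u|^p\le 1$ and $\dashint_{B_1}|f|^p\le\delta^p$, then there is $h$ harmonic in $B_{1/2}$ with $\|D^2h\|_{L^\infty(B_{1/4})}\le C_0(n)$ and $\dashint_{B_{1/2}}|u-h|^p\le\eta^p$. I would prove this by contradiction, as in the Hölder setting: a failing sequence $u_k,f_k$ would satisfy $\dashint_{B_1}|u_k|^p\le 1$, $f_k\to 0$ in $L^p(B_1)$, yet $\dashint_{B_{1/2}}|u_k-h|^p>\eta_0^p$ for every harmonic $h$. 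Splitting $u_k=N*(f_k\mathbf{1}_{B_{3/4}})+v_k$ with $v_k$ harmonic in $B_{3/4}$, the potential term tends to $0$ in $L^p(B_{3/4})$ by Young's inequality, while $v_k$ is bounded in $L^p(B_{3/4})$ and so, harmonic functions being precompact in $C^\infty_{\mathrm{loc}}$, converges in $L^p(B_{1/2})$ along a subsequence; hence $u_k\to u_\infty$ in $L^p(B_{1/2})$ with $u_\infty$ harmonic, and the elementary interior bound $\|D^2u_\infty\|_{L^\infty(B_{1/4})}\le C_0\dashint_{B_{1/2}}|u_\infty|\le C_0$ makes $h=u_\infty$ work for $k$ large, a contradiction. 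The crucial point — and where circularity with the Calderón–Zygmund estimate must be avoided — is that this compactness uses only the $L^p\to L^p_{\mathrm{loc}}$ mapping of the potential and the smoothing of harmonic functions, never a second–derivative bound on $N*f$.

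Rescaling the lemma to all balls $B_r(x_0)\subset B_1$ and using $\Delta(u-h)=f$ together with $\|D^2h\|_{L^\infty}\le C_0$ yields a density/good-$\lambda$ estimate: there is $N_0=N_0(n)$ such that for every $\varepsilon>0$ there is $\delta>0$ with
\begin{equation*}
\{|D^2u|\le 1\}\cap B_1\neq\emptyset,\quad \dashint_{B_1}|f|^p\le\delta^p\ \Longrightarrow\ \big|\{|D^2u|>N_0\}\cap B_{1/2}\big|\le\varepsilon .
\end{equation*}
Iterating this over dyadic scales by means of a Calderón–Zygmund-type covering lemma (a covering statement, not an interpolation theorem) gives a geometric decay of $\big|\{|D^2u|^p>N_0^{\,m}\}\cap B_{1/2}\big|$, and summing $\sum_m N_0^{\,m}\,\big|\{|D^2u|^p>N_0^{\,m}\}\cap B_{1/2}\big|$ produces \eqref{eq: lp_elliptic_estimate}. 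In this last summation the contribution of $f$ is accounted for through (the super–level sets of) a maximal function of $|f|^q$ for some fixed $q<p$, for which the relevant $L^{p/q}$ bound is elementary; this is the only place a maximal operator enters, and no interpolation theorem is invoked.

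I expect the two delicate points to be: keeping the compactness argument in the core step non-circular, i.e. arranging the limit to be genuinely harmonic using only potential–theoretic inputs; and, in the final step, the bookkeeping that makes the contribution of $f$ to the measures of the super–level sets sum to a constant times $\|f\|_{L^p}^p$ — precisely where the classical proof uses interpolation, and which must here be handled by the covering lemma together with an elementary maximal inequality at a subcritical exponent.
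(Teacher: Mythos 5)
Your plan is essentially the Caffarelli--Wang geometric scheme (harmonic approximation lemma proved by compactness, good-$\lambda$ density estimate, Vitali/CZ covering, dyadic summation over superlevel sets), which the paper explicitly cites as \cite{Ca89,Wa13} and distinguishes from its own method. The paper's actual proof is genuinely different: it introduces the $2$-sharp maximal function $\mathcal{M}_2^{\#}$, uses a blow-up/contradiction argument (rescaling at a sequence of radii chosen so that the mean oscillation of $D^2u_k$ nearly attains its supremum, passing to a limit, and invoking Liouville) to prove the \emph{pointwise} inequality
\begin{equation*}
\mathcal{M}_2^{\#} D^2u(x) \;\leq\; C\bigl(\|u\|_{L^2(B_1)}^2 + \|f\|_{L^2(B_1)}^2 + \mathcal{M}_2^{\#} f(x)\bigr), \qquad x\in B_{1/2},
\end{equation*}
and then simply applies the Fefferman--Stein theorem to upgrade this to the $L^p$ estimate for $p>2$, with duality handling $1<p<2$. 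There is no harmonic competitor $h$, no superlevel-set iteration, and no covering lemma in the paper's proof; the single harmonic-analytic input is the Fefferman--Stein bound. So your route is a legitimate alternative but not the one here, and the paper's point is precisely that its blow-up-to-sharp-maximal-function route is shorter than what you propose.

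Beyond the difference of route, two steps in your sketch have real gaps. First, your harmonic approximation lemma controls $\dashint_{B_{1/2}}|u-h|^p$, whereas the density estimate requires control on the superlevel sets of $|D^2u|$, hence on $D^2(u-h)$. The phrase ``using $\Delta(u-h)=f$ together with $\|D^2h\|_{L^\infty}\le C_0$'' does not close this on its own: to pass from smallness of $u-h$ to smallness of $D^2(u-h)$ you must invoke the $p=2$ energy estimate, and for $p<2$ your normalization $\dashint_{B_1}|u|^p\le1$ gives no $L^2$ control of $u$ at all. This is why \cite{Wa13} formulates the approximation lemma directly for Hessian averages ($\dashint|D^2u|^2\le1$) rather than $L^p$ averages of $u$. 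Second, the good-$\lambda$ statement as you wrote it — that $\{|D^2u|\le1\}\cap B_1\neq\emptyset$ together with $\dashint_{B_1}|f|^p\le\delta^p$ implies $|\{|D^2u|>N_0\}\cap B_{1/2}|\le\varepsilon$ — is not correct: the existence of a single point where $|D^2u|\le1$ gives no bound on $\dashint_{B_1}|u|^p$ (nor on any Hessian average), so the hypothesis of your own approximation lemma is not verified, and the implication fails for, say, $u$ a quadratic polynomial of large norm plus a compactly supported perturbation that flattens $D^2u$ at one point. The standard fix is to replace the pointwise condition by $\mathcal{M}(|D^2u|^2)(x_0)\le1$ for some $x_0\in B_1$, and to iterate over superlevel sets of $\mathcal{M}|D^2u|^2$ and $\mathcal{M}|f|^2$; this is exactly Wang's formulation. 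Both gaps are repairable by following \cite{Wa13} closely, but as stated your argument does not go through.
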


The classical proof of this result, which can be found in \cite{GT}, is as follows:
\begin{itemize}
\item[(i)] Prove the result for $p=2$ by an easy integration by parts argument.
\item[(ii)] Using the Calder\'on-Zygmund decomposition, and the maximal function of Hardy-Littlewood, prove a \emph{weak} $L^1$ estimate corresponding to the case $p=1$.
\item[(iii)] Prove the Marcinkiewicz interpolation theorem, and use it to deduce the result for all $1<p<2$.
\item[(iv)] By a duality argument, deduce the result for all $2<p<\infty$.
\end{itemize}

This result and its proof are a landmark in elliptic PDE, and connects this subject to harmonic analysis.

Up to date, some other approaches have been introduced to find new proofs of this result.
In particular, an alternative proof is presented in \cite{Le13}, in which (ii)-(iii)-(iv) are replaced by:
\begin{itemize}
\item[(ii')] Prove an $L^\infty$-to-BMO estimate, corresponding to the case $p=\infty$.
\item[(iii')] Prove the Stampacchia interpolation theorem, by using the one due to Marcinkiewicz as well as the sharp maximal function of Fefferman-Stein, and use it to deduce the result for all $2<p<\infty$.
\item[(iv')] By a duality argument, deduce the result for all $1<p<2$.
\end{itemize}

On the other hand, a completely different (and more geometric) approach was developed in \cite{Ca89} to treat fully nonlinear equations for $p>n$, and later in \cite{Wa13} for the Laplacian for all $1<p<\infty$.
The proof in \cite{Wa13} uses only the maximal function, energy estimates, and Vitali covering lemma, to establish a decay estimate for the superlevel sets of $\mathcal M|D^2u|^2$ in terms of those of $\mathcal M|f|^2$ (where $\mathcal M$ is the maximal function).

Our goal in this paper is to give a new proof of this result, which we believe is simpler and easier to follow than the one in \cite{Wa13,Ca89}.
Our proof uses only the sharp maximal function, combined with a quite elementary contradiction and compactness argument in the spirit of those in \cite{Si97,Se15}; see also \cite{FR22}.

In the last decade there have been many regularity estimates that have been established by a contradiction and compactness estimate in the spirit of those in \cite{Si97,Se15}.
However, this type of argument had been always used to prove estimates in H\"older spaces, and the present paper is the first one to establish a $W^{2,p}$ estimate via a blow-up argument.
Notice that $L^p$ regularity is quite different than $C^{0,\alpha}$, which is pointwise. 
To solve this issue we use the sharp maximal function, and establish the pointwise inequality \eqref{eq: elliptic_contradiction_estimate} below.
The idea to establish such a pointwise inequality comes from \cite{LZ24}, where a simple proof of the $L^\infty$ to BMO estimate is given, with a different argument.

Our proof is also somewhat related to the proof of \cite[Theorem 13.20]{Le23}, which uses the sharp maximal function and Caccioppoli's inequality (but not a blow-up and compactness argument).
We refer also to the work \cite{Kr07} and the survey \cite{Do20} for another approach in which the sharp maximal function is used to prove $L^p$ estimates for equations with VMO coefficients, in particular obtaining pointwise estimates for the sharp function of $D^2 u$.

\subsection{Acknowledgements}
X. R. was supported by the European Union under the ERC Consolidator Grant No. 101123223 (SSNSD), by the AEI project PID2021-125021NA-I00 (Spain), by the AEI-DFG project PCI2024-155066-2, by the grant RED2022-134784-T funded by AEI/10.13039/501100011033, by AGAUR Grant 2021 SGR 00087 (Catalunya), and by the Spanish State Research Agency through the Mar\'ia de Maeztu Program for Centers and Units of Excellence in R{\&}D (CEX2020-001084-M).

The authors would like to thank the referee for the careful reading of the manuscript and their comments, which helped to improve the paper.


\section{Elliptic equations}
\label{sec:elliptic-estimate}
In this section we focus on the regularity of the weak solutions to Poisson equation 
    \begin{equation}
        \Delta u = f \quad \textrm{in}\quad  B_{1}.
        \label{eq: poisson_equation}
    \end{equation}
    
\subsection{Sharp maximal function}
We provide a very brief introduction to the tools needed in order to prove Theorem \ref{th: calderon-zygmund theorem}. Further information can be found in \cite{LL01, Ev10, GM12}.

We define the 2-sharp maximal function (see, e.g. \cite{ST89}) for any $w\in L^2(D)$ and any $B_{r_\circ}(x)\subset D$ as
\begin{equation*}
        \mathcal{M}_{2}^{\#}w(x) = \sup_{r\in (0,r_\circ)} \fint_{B_r(x)}\left|w-\overline{w}_{B_r(x)}\right|^2,
\end{equation*}
where $\overline{w}_{E}:=\fint_E w$ and $r_\circ>0$ is a fixed constant.

Then, we have the following estimate for any $2<p<\infty$ and $w\in L^p(B_{\rho+r_\circ})$
\begin{equation}
        c\|w\|_{L^p(B_\rho)} \;\leq\; \|\mathcal{M}_{2}^{\#}w\|^{1/2}_{L^{p/2}(B_\rho)}+ \|w\|_{L^1(B_{\rho+r_\circ})}\;\leq\; C\|w\|_{L^p(B_{\rho+r_\circ})},
       \label{estimate: lp_norms_maximal}
\end{equation}
where $c,C$ depend only on $p$, $n$, $r_\circ$, and $\rho$. 
Indeed, the first inequality follows easily from a classical theorem of {Fefferman and Stein} on the sharp maximal function  \cite{GM12,Ma04} \footnote{More precisely, we use that for any $v\in L^1(U)$ and any $B_{r_\circ}(x)\subset U$ we have $v^{\#}(x) \leq \sup_{r\in (0,r_\circ)} \fint_{B_r(x)}\left|v-\overline{v}_{B_r(x)}\right| +C\|v\|_{L^1(U)}$, and apply this to $v=w|_{B_{\rho+r_\circ}}$ together with the results of \cite{GM12,Ma04} and H\"older's inequality.},
while the second inequality follows from the trivial bound $ \mathcal{M}_{2}^{\#}w\leq  4\mathcal{M} (w^2)$ and the strong $L^p$-$L^p$ inequality for the maximal function \cite{GT}.

When applied to a Hessian matrix, we still denote $ \mathcal{M}_{2}^{\#}D^2 w(x) = \sup_{r\in (0,r_\circ)} \fint_{B_r(x)}\big|D^2w-\overline{D^2w}_{B_r(x)}\big|^2$.

\subsection{Regularity estimates}
We want to establish the estimate \eqref{eq: lp_elliptic_estimate} for $p\neq 2$.
Indeed, recall that in the simplest case $p=2$, i.e., 
    \begin{equation}
       \|D^2 u\|_{L^2(B_{1/2})} \leq C\left(\|u\|_{L^2(B_1)} +\|f\|_{L^2(B_1)}\right),
        \label{p=2}
    \end{equation}
it follows easily from the identity
\[\int_{\R^n} |D^2 v|^2 = \int_{\R^n} |\Delta v|^2 \qquad \forall v\in C^\infty_c(\R^n);\]
see e.g. \cite[Remark 2.13]{FR22}.

The estimate \eqref{eq: lp_elliptic_estimate} $2<p<\infty$ will follow from the following:

\begin{proposition}
\label{prop: elliptic_estimate}
    Let $u,f\in C^{\infty}(B_1)$, with $\Delta u = f$ in $B_1$, and $r_\circ=\frac18$. 
Then,
    \begin{equation}
    \mathcal{M}_{2}^{\#}D^2u(x) \leq C\big(\|u\|^2_{L^2(B_1)} + \|f\|^2_{L^2(B_1)} +  \mathcal{M}_{2}^{\#}f(x)\big),
    \label{eq: elliptic_contradiction_estimate}
    \end{equation}
for any $x\in B_{3/4}$,  where $C$ depends only on $n$.
\end{proposition}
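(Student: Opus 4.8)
The plan is to argue by contradiction and compactness, following the blow-up strategy of \cite{Si97,Se15}. Suppose the estimate fails: then there exist sequences $u_k,f_k\in C^\infty(B_1)$ with $\Delta u_k=f_k$ in $B_1$, points $x_k\in B_{1/2}$, and radii $r_k>0$ such that
\[
\fint_{B_{r_k}(x_k)}\big|D^2u_k-\overline{D^2u_k}_{B_{r_k}(x_k)}\big|^2 \;\geq\; k\,\Big(\|u_k\|_{L^2(B_1)}^2+\|f_k\|_{L^2(B_1)}^2+\mathcal M_2^\# f_k(x_k)\Big).
\]
After dividing $u_k$ (and $f_k$) by the square root of the left-hand side we may normalize so that this oscillation equals $1$, while all three quantities on the right tend to $0$; in particular $\|u_k\|_{L^2(B_1)}\to 0$, $\|f_k\|_{L^2(B_1)}\to 0$, and $\mathcal M_2^\# f_k(x_k)\to 0$. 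There are two regimes for the radii, $r_k\to 0$ (up to subsequence $r_k$ small) versus $r_k$ bounded below; by a rescaling $v_k(y)=r_k^{-2}\big(u_k(x_k+r_ky)-\ell_k(y)\big)$, where $\ell_k$ is the affine part of $u_k$ at an appropriate scale, we reduce to functions defined on a large ball, solving $\Delta v_k=g_k$ with $g_k$ the corresponding rescaling of $f_k$, normalized so that $\fint_{B_1}\big|D^2v_k-\overline{D^2v_k}_{B_1}\big|^2=1$ and with the rescaled right-hand side data still going to $0$. (The case $r_k$ bounded below is the easier one, essentially already controlled by the $p=2$ estimate \eqref{p=2} together with interior estimates, since then the oscillation is comparable to $\|D^2u_k\|_{L^2}$ on a fixed ball.)

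The compactness step: by interior $L^2$ estimates for the Laplacian (iterating \eqref{p=2}, or rather its rescaled form) applied to $v_k$, the $D^2v_k$ are bounded in $L^2_{\loc}$; subtracting the mean Hessian and using that $\Delta v_k=g_k\to 0$ in $L^2_{\loc}$ along with Caccioppoli-type estimates, one gets $v_k\to v_\infty$ in, say, $H^2_{\loc}$ (after subtracting affine functions at each scale, so that the limit is normalized), where $v_\infty$ is harmonic: $\Delta v_\infty=0$. By interior regularity of harmonic functions, $D^2v_\infty$ is smooth; in fact one should arrange, via the normalization of $\ell_k$, that $v_\infty$ has vanishing first- and second-order Taylor coefficients at the origin, so that $\overline{D^2v_\infty}_{B_\rho}\to 0$ as $\rho\to 0$ and, by smoothness, $\fint_{B_\rho}|D^2v_\infty-\overline{D^2v_\infty}_{B_\rho}|^2 = o(1)$ as $\rho\to 0$ — or rather, in the unit-ball normalization, $\fint_{B_1}|D^2v_\infty-\overline{D^2v_\infty}_{B_1}|^2$ should be forced to be both $1$ (by passing the normalization to the limit via the $H^2_{\loc}$ convergence) and $<1$ or $=0$ (by the mean-value/smoothness properties of harmonic functions under the chosen rescaling), giving the contradiction.

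The main obstacle, and the point requiring the most care, is the bookkeeping of the rescaling so that the limit object is genuinely nontrivial yet forced to be trivial: one must choose the radii and the subtracted affine functions $\ell_k$ so that (a) the normalization $\fint_{B_1}|D^2v_k-\overline{D^2v_k}_{B_1}|^2=1$ survives in the limit, which requires the convergence $D^2v_k\to D^2v_\infty$ to be strong in $L^2(B_1)$ — this is where one leans on the equation and Caccioppoli rather than mere weak compactness; and (b) the limit $v_\infty$, being harmonic and suitably normalized at the origin, cannot support oscillation of its Hessian of size $1$ at unit scale — this is a quantitative mean-value property for $D^2$ of harmonic functions. The dichotomy $r_k\to 0$ vs. $r_k\geq r_0>0$ must also be handled: when $r_k$ does not shrink, no blow-up is needed and \eqref{p=2} plus the crude bound $\mathcal M_2^\# f_k(x_k)\geq 0$ suffice after covering $B_{1/2}$ by finitely many balls. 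Finally, to pass from Proposition 2.2 to Theorem 1.1 for $2<p<\infty$ one takes $L^p$ norms of \eqref{eq: elliptic_contradiction_estimate} over $B_{1/2}$, uses the left inequality in \eqref{estimate: lp_norms_maximal} for $w=D^2u$ and the right inequality for $w=f$, and absorbs $\|u\|_{L^2(B_1)}\le C\|u\|_{L^p(B_1)}$; the range $1<p<2$ follows by the standard duality argument, while $p=2$ is \eqref{p=2}.
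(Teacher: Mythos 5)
Your outer scaffolding (contradiction, blow-up, compactness, harmonic limit) matches the paper's, but there is a genuine gap that makes the argument fail to close. The paper relies on an elementary but essential preliminary result, \autoref{lemma: sequences theta}: it extracts radii $r_m\to 0$ and a subsequence $k_m$ so that the oscillation at scale $r_m$ for $u_{k_m}$ \emph{almost maximizes} the Hessian oscillation over all larger scales $\rho\geq r_m$ and all indices $k$ simultaneously. After rescaling, this gives two crucial controls at every $R>1$: the oscillation bound $\fint_{B_R}|D^2v_m-\overline{D^2v_m}_{B_R}|^2\leq 1$, and (together with $\overline{D^2v_m}_{B_1}=0$) the bound $\|D^2v_m\|^2_{L^2(B_R)}\leq C(R)$. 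Your proposal normalizes only at the single scale $R=1$, and your claim that ``interior $L^2$ estimates applied to $v_k$'' yield $D^2v_k$ bounded in $L^2_{\loc}$ is circular: the $p=2$ estimate requires an a priori bound on $\|v_k\|_{L^2(B_R)}$ that you do not yet have. Without the almost-maximizing choice of radii, the rescaled Hessians can have uncontrolled oscillation at scales $R>1$, and compactness cannot be extracted.

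Moreover, your closing step (b) rests on a claim that is false: a harmonic function $v_\infty$ normalized to vanish to second order at the origin \emph{can} have $\fint_{B_1}|D^2v_\infty-\overline{D^2v_\infty}_{B_1}|^2=1$. Take $v(x)=c\,(x_1^3-3x_1x_2^2)=c\,\mathrm{Re}\,(x_1+ix_2)^3$: it is harmonic, vanishes to second order at $0$, yet $D^2v$ is a nontrivial linear tensor whose unit-scale oscillation can be made exactly $1$ by choosing $c$. The mean-value property pins down the value of $D^2v$ at the center, not its oscillation. What actually forces $D^2v$ to be constant in the paper is the multi-scale bound $\fint_{B_R}|D^2v|^2\leq 1$ for \emph{all} $R>1$ (inherited from the lemma, after using $\overline{D^2v}_{B_R}=\overline{D^2v}_{B_1}=0$ via the mean-value property), combined with Liouville's theorem: $D^2v$ is a harmonic tensor of polynomial growth, hence a polynomial, and the uniform $L^2$-average bound forces it to be constant, hence zero. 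Your step (a) — strong $L^2(B_1)$ convergence of Hessians — is also unnecessary and unjustified: the paper instead exploits that $\|D^2v_m\|^2_{L^2(B_1)}=1/2$ is \emph{fixed}, applies the $p=2$ estimate to get $\|v_m\|_{L^2(B_2)}\geq c_\circ>0$, and uses Rellich compactness of $v_m$ itself; the contradiction is $v\equiv 0$ versus $\|v\|_{L^2(B_2)}\geq c_\circ>0$, not a paradox about unit-scale oscillation. (Finally, you should subtract a quadratic polynomial $p_m$, not merely an affine $\ell_k$, so that $\overline{D^2v_m}_{B_1}=0$; otherwise the endgame $D^2v\equiv\textrm{const}\Rightarrow v$ affine $\Rightarrow v\equiv 0$ does not follow from your normalization.)
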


To prove this result, we need the following elementary Lemma.

\begin{lemma}
\label{lemma: sequences theta}
    Let $\{u_k\}_k\subset W^{2,2}(B_{r_\circ})$ be a sequence of functions satisfying $\|D^2u_k\|_{L^2(B_{r_\circ})}\leq C_0$ and 
    \begin{equation*}
        \sup_k \sup_{r\in(0,r_\circ)}\fint_{B_r}|D^2u_k(x) - \overline{D^2u_k}_{B_r}|^2 = \infty.
    \end{equation*}
    Then, for any $0<\delta < 1$ there exists $r_m\to 0$ and a subsequence $k_m$ such that:
    \begin{equation*}
    (1-\delta) \fint_{B_\rho}|D^2u_{k}(x) - \overline{D^2u_{k}}_{B_\rho}|^2 \, \leq \fint_{B_{r_m}}|D^2u_{k_m}(x) - \overline{D^2u_{k_m}}_{B_{r_m}}|^2 \qquad \forall k\in\mathbb N, \ \rho\in [r_m,r_\circ].
    \end{equation*}
\end{lemma}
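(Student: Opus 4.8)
The plan is to establish this by an entirely elementary argument about the family of average oscillations, with no compactness or PDE input. Write, for $k\in\mathbb N$ and $r\in(0,1)$,
\[
\theta_k(r):=\fint_{B_r}\big|D^2u_k-\overline{D^2u_k}_{B_r}\big|^2,
\qquad
\Psi(r):=\sup_{k\in\mathbb N}\theta_k(r),
\]
and record the crude bound $\theta_k(r)\le\fint_{B_r}|D^2u_k|^2\le C_0^2/|B_r|$. Thus the oscillations are uniformly bounded at any scale bounded below, and the hypothesis $\sup_k\sup_{r>0}\theta_k(r)=+\infty$ forces the supremum to be produced only by scales $r\to0$. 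The quantity one should actually track is the running supremum from scale $r$ upward,
\[
S(r):=\sup_{k\in\mathbb N}\ \sup_{\rho\ge r}\theta_k(\rho)=\sup_{\rho\ge r}\Psi(\rho),\qquad r\in(0,1),
\]
which is non-increasing, finite at every fixed $r>0$ (by the crude bound), and, by monotonicity together with the divergence hypothesis, satisfies $S(r)\to+\infty$ as $r\to0^+$.

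The key step is the claim: \emph{for every $r_0\in(0,1)$ there are $r\in(0,r_0)$ and $k\in\mathbb N$ with $\theta_k(r)\ge(1-\delta)S(r)$} --- that is, at some arbitrarily small scale the oscillation of a single $u_k$ at that one scale already captures, up to the factor $1-\delta$, the supremum over all indices and all larger scales. I would argue by contradiction: if $\theta_k(r)<(1-\delta)S(r)$ for all $k$ and all $r\in(0,r_0)$, equivalently $\Psi(r)\le(1-\delta)S(r)$ on $(0,r_0)$, then for $r\in(0,r_0)$ split
\[
S(r)=\max\Big(\sup_{\rho\in[r,r_0)}\Psi(\rho),\ \sup_{\rho\ge r_0}\Psi(\rho)\Big)
=\max\Big(\sup_{\rho\in[r,r_0)}\Psi(\rho),\ S(r_0)\Big).
\]
Since $S$ is non-increasing, $\Psi(\rho)\le(1-\delta)S(\rho)\le(1-\delta)S(r)$ for $\rho\in[r,r_0)$, so $S(r)\le\max\big((1-\delta)S(r),\,S(r_0)\big)$. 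Taking $r$ so small that $S(r)>S(r_0)$ (possible since $S(r)\to\infty$ while $S(r_0)$ is a fixed finite number) and using $(1-\delta)S(r)<S(r)$ gives $S(r)<S(r)$, a contradiction.

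Granting the claim, one concludes immediately: applying it with $r_0=\tfrac1{m+1}$ gives $r_m\in(0,\tfrac1{m+1})$ and $k_m\in\mathbb N$ with $\theta_{k_m}(r_m)\ge(1-\delta)S(r_m)$, and $r_m\to0$. For any $k\in\mathbb N$ and any $\rho\ge r_m$, the definition of $S$ yields $\theta_k(\rho)\le\Psi(\rho)\le S(r_m)\le(1-\delta)^{-1}\theta_{k_m}(r_m)$, which is precisely
\[
(1-\delta)\fint_{B_\rho}\big|D^2u_k-\overline{D^2u_k}_{B_\rho}\big|^2
\ \le\
\fint_{B_{r_m}}\big|D^2u_{k_m}-\overline{D^2u_{k_m}}_{B_{r_m}}\big|^2 .
\]
The only genuine obstacle here is conceptual: a scale that nearly maximizes $\Psi(\cdot)$ alone need not dominate larger scales, and the fix --- passing to the running supremum $S$ and exploiting its monotonicity to run the contradiction --- is exactly what the statement is engineered for. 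The remaining steps are bookkeeping, and the bound $\|D^2u_k\|_{L^2(B_1)}\le C_0$ enters only through the crude estimate that confines the blow-up of the oscillations to small scales.
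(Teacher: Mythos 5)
Your proposal is correct and is essentially the paper's argument: both work with the running supremum $S(r)=\sup_k\sup_{\rho\ge r}\theta_k(\rho)$ (called $\Theta$ in the paper), use its monotonicity and the crude bound $\theta_k(r)\le C_0^2/|B_r|$ to confine the blow-up to small scales, and conclude by comparing $\theta_{k_m}(r_m)$ with $S(r_m)$. The only difference is cosmetic: the paper picks an approximate maximizer $(k_m,r_m)$ for $S(\varepsilon_m)$ with $r_m\ge\varepsilon_m$ and then downgrades via $S(r_m)\le S(\varepsilon_m)$, arguing separately that $r_m\to 0$, whereas you package the same facts into a short proof by contradiction that produces $r_m<r_0$ directly.
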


\begin{proof}
For $r<r_\circ$, the quantity: 
\begin{equation*}
    \Theta(r) = \sup_k \sup_{\rho\in [r,r_\circ]} \fint_{B_\rho}|D^2u_k(x) - \overline{D^2u_k}_{B_\rho}|^2
\end{equation*}
satisfies that $\Theta(r) \to \infty$ as $r\to0$. Hence, for any $\varepsilon>0$ there exists $r_\varepsilon \geq \varepsilon$ and  $k_\varepsilon$ such that:
\begin{equation*}
    (1-\delta)\Theta(\varepsilon) \leq \fint_{B_{r_\varepsilon}}|D^2u_{k_\varepsilon}(x) - \overline{D^2u_{k_\varepsilon}}_{B_{r_\varepsilon}}|^2.
\end{equation*}

Let now $\varepsilon\to 0$. Since $\|D^2u_{k_\varepsilon}\|_{L^2(B_{r_\circ})}$ is bounded, then for $\fint_{B_{r_\varepsilon}}|D^2u_{k_\varepsilon}(x) - \overline{D^2u_{k_\varepsilon}}_{B_{r_\varepsilon}}|^2 \to \infty$ we need $r_\varepsilon\to 0$ as $\varepsilon\to0$. 
Moreover, by monotonicity of $\Theta(r)$, we have that $\Theta(r_\varepsilon)\leq\Theta(\varepsilon)$ with
\begin{equation*}
    (1-\delta)\Theta(r_\varepsilon) \leq \fint_{B_{r_\varepsilon}}|D^2u_{k_\varepsilon}(x) - \overline{D^2u_{k_\varepsilon}}_{B_{r_\varepsilon}}|^2.
\end{equation*}
The lemma follows by choosing any sequence $\varepsilon_m\to0$.
\end{proof}

We next give the: 

\begin{proof}[Proof of Proposition \ref{prop: elliptic_estimate}]
It suffices to establish the result for $x=0$.
    Let us show the result by contradiction through a blow-up method. Suppose that the estimate \eqref{eq: elliptic_contradiction_estimate} does not hold. 
Then, there exist  functions $u_k,f_k\in C^{\infty}(B_1)$, such that $\Delta u_k = f_k$ in $B_1$, and
    \begin{equation*}
    \mathcal{M}_{2}^{\#}D^2u_{k}(0) > k\big(\|u_{k}\|^2_{L^2(B_1)} + \|f_{k}\|^2_{L^2(B_1)} +  \mathcal{M}_{2}^{\#}f_{k}(0)\big)
    \end{equation*}
    for all $k\in \mathbb N$.
Moreover, dividing $u_k$ by a constant if necessary, we may assume
\[\|u_{k}\|^2_{L^2(B_1)} + \|f_{k}\|^2_{L^2(B_1)} +  \mathcal{M}_{2}^{\#}f_{k}(0) \;= 1.\]
It then follows from the previous two inequalities that
    \begin{equation*}
        \sup_k \sup_{r\in(0,r_\circ)}\fint_{B_r}|D^2u_k(x) - \overline{D^2u_k}_{B_r}|^2 = \infty.
    \end{equation*}
Moreover, thanks to \eqref{p=2}, we have that $\|D^2 u_k\|_{L^2(B_{1/2})}\leq C$.
    
    Let $r_m\to 0$ and $k_m$ be given by \autoref{lemma: sequences theta} (applied with $\delta=\frac12$) and define new functions $v_m, g_m \in C^{\infty}(B_{1/r_m})$ as
    \begin{align*}
        v_m(x) &= \frac{u_{k_m}(r_mx) - p_m(x)}{r_m^2\Theta^{1/2}(r_m)}, & g_m(x)&=\frac{f_{k_m}(r_mx)-\overline{f_{k_m}}_{B_{r_m}}}{\Theta^{1/2}(r_m)},
    \end{align*}
where 
\begin{equation}\label{Theta-ineq}
  \fint_{B_\rho}\big|D^2u_{k}(x) - \overline{D^2u_{k}}_{B_\rho}\big|^2 \leq 
2\fint_{B_{r_m}}\big|D^2u_{k_m}(x) - \overline{D^2u_{k_m}}_{B_{r_m}}\big|^2
=: \Theta(r_m)  \qquad \forall k\in \mathbb N,\ \rho\geq r_m.
\end{equation}
Notice also that it follows from the previous inequality that $\Theta(r_m)\to \infty$.

    Here, $p_m(x)$ denotes a quadratic polynomial such that $\overline{v_m}_{B_1} = \overline{\nabla v_m}_{B_1} = \overline{D^2v_m}_{B_1} = 0$. 
Notice that $\Delta v_m = g_m$ in $B_{1/r_m}$.
    
    Let us show that $D^2 v_m$ are bounded in the $L^2$ norm in $B_R$ with $1<R<1/r_m$:
    \begin{multline*}
    \|D^2v_m\|_{L^2(B_R)}^2 \;=\; \frac{\int_{B_{R}}|D^2u_{k_m}(r_mx) - \overline{D^2u_{k_m}}_{B_{r_m}}|^2}{\Theta(r_m)} \;=\; \frac{cR^n\fint_{B_{Rr_m}}|D^2u_{k_m}(y) - \overline{D^2u_{k_m}}_{B_{r_m}}|^2}{\Theta(r_m)}  \\
    \leq\; \frac{2cR^{n}\left[\fint_{B_{Rr_m}}|D^2u_{k_m}(y) - \overline{D^2u_{k_m}}_{B_{Rr_m}}|^2 + |\overline{D^2u_{k_m}}_{B_{Rr_m}} - \overline{D^2u_{k_m}}_{B_{r_m}}|^2\right]}{\Theta(r_m)} \leq\; CR^{n}(1+R^{n}),
    \end{multline*}
  where we used \eqref{Theta-ineq} and 
    \begin{multline*}
    \big|\overline{D^2u_{k_m}}_{B_{Rr_m}} - \overline{D^2u_{k_m}}_{B_{r_m}}\big|^2 \;\leq\; \left(\fint_{B_{r_m}} |D^2u_{k_m}(x) - \overline{D^2u_{k_m}}_{B_{Rr_m}}|\right)^{2} \\
    \leq \; \fint_{B_{r_m}} |D^2u_{k_m}(x) - \overline{D^2u_{k_m}}_{B_{Rr_m}}|^2 \\
    \leq\; R^{n}\fint_{B_{Rr_m}} |D^2u_{k_m}(x) - \overline{D^2u_{k_m}}_{B_{Rr_m}}|^2 \;\leq\; R^{n}\Theta(r_m)
    \end{multline*}
  where we used \eqref{Theta-ineq} again. 
Moreover, using the same arguments as above, one gets
    \begin{equation*}
    \|g_m\|^2_{L^2(B_R)} \leq \frac{R^{n}(1+R^{n})\mathcal{M}_{2}^{\#}f_{k_m}(0)}{\Theta(r_m)}.
    \end{equation*}

    Since $\mathcal{M}_{2}^{\#}f_{k_m}(0) \leq 1$ by hypothesis, then $\|g_m\|^2_{L^2(B_R)} \to 0$ as $m\to\infty$. 
Note as well that
    \begin{equation*}
    \|D^2v_m\|^2_{L^2(B_1)} = \frac{\int_{B_{1}}|D^2u_{k_m}(r_mx) - \overline{D^2u_{k_m}}_{B_{r_m}}|^2}{\Theta(r_m)} =\frac12,
    \end{equation*}
by definition.

Using the estimate \eqref{p=2} (the case $p=2$),  we deduce $\frac12 \leq \|D^2v_m\|_{L^2(B_1)} \leq C(\|v_m\|_{L^2(B_2)} + \|g_m\|_{L^2(B_2)})$ and therefore, for $m$ large enough,
\[\|v_m\|_{L^2(B_2)} \geq c_\circ>0.\] 
Finally, for any fixed $R\in (1,1/r_m)$ we have:
    \begin{multline*}
    \fint_{B_R}|D^2v_m(x)-\overline{D^2v_m}_{B_R}|^{2} \;\leq\; \frac{\fint_{B_{R}}|D^2u_{k_m}(r_mx)-\overline{D^2u_{k_m}(r_mx)}_{B_R}|^2}{\Theta(r_m)} \\
    \leq\; \frac{\fint_{B_{Rr_m}}|D^2u_{k_m}(y)-\overline{D^2u_{k_m}}_{B_{Rr_m}}|^2}{\Theta(r_m)}  \leq 1.
    \end{multline*}
    
For any compact set $K\subset \R^n$, we have proved that the sequence $D^2v_m$ is bounded in $L^2(K)$ --for $m$ large enough so that $K\subset B_{1/r_m}$.
Then, since $\overline{v_m}_{B_1} = \overline{\nabla v_m}_{B_1} =0$, it follows from Poincar\'e inequality that $v_m$ is bounded in $W^{2,2}(K)$.
Hence, up to a subsequence, we have that $v_m\to v$ and $\nabla v_m\to \nabla v$ strongly in $L^2(K)$, while $D^2v_m \to D^2 v$ weakly in $L^2(K)$. 
Moreover, since $g_m\to 0$ strongly in $L^2(K)$, then we can pass the equation $\Delta v_m=g_m$ in $K$ (in its weak formulation) to the limit to deduce that $v$ is harmonic in $K$.
Since this can be done for any compact set $K\subset\R^n$, we find
\[\Delta v = 0 \quad \textrm{in}\quad \mathbb{R}^{n}.\]
    
Using that $v_m\rightharpoonup v$ weakly in $W^{2,2}$ (and strongly in $L^2$), we find $\overline{v_m}_{B_1}\to \overline{v}_{B_1}$, $  \overline{\nabla v_m}_{B_1} \to \overline{\nabla v}_{B_1}$, and $ \overline{D^2v_m}_{B_1}\to \overline{D^2v}_{B_1}$, so that
    \begin{equation}
        \overline{v}_{B_1} = \overline{\nabla v}_{B_1} = \overline{D^2v}_{B_1} = 0,
        \label{cond:1}
    \end{equation}
    \begin{equation}
        \|v\|_{L^2(B_2)}\geq c_\circ>0.
        \label{cond:2}
    \end{equation}
Moreover, using the lower semicontinuity of the $W^{2,2}$ norm under weak convergence, we find
    \begin{equation}
        \|D^2v\|^2_{L^2(B_R)} \;\leq\; CR^{n}(1+R^{n}) \text{  for any  } R>1,
        \label{cond:3}
    \end{equation}
    \begin{equation}
        \fint_{B_R}|D^2v(x)-\overline{D^2v}_{B_R}|^{2} \;\leq\; 1 \text{  for any   } R>1.
        \label{cond:4}
    \end{equation}
    
    Using the {mean value property}, for any $x\in B_{R/2}$ and any $R>1$ we have
    \begin{equation*}
    |D^2 v(x)| \;\leq\; \fint_{B_{R/2}(x)}|D^2 v| \;\leq\; \frac{C}{R^{n/2}}\|D^2 v\|_{L^2(B_{R})} \leq CR^{3n/2}.
    \end{equation*}
Hence, $D^2 v$ is a harmonic function with polynomial growth, and the Liouville theorem implies that it {is a polynomial}.
However, by the first condition \eqref{cond:1} and again the mean value property,
    \begin{equation*}
        \fint_{B_R} D^2v(x) = \fint_{B_1} D^2v(x) = 0,
    \end{equation*}
 meaning that the last condition \eqref{cond:4} is $\fint_{B_R} |D^2v|^2 \leq 1$ for any  $R>1$.
This means that the polynomial $D^2v$ must be identically zero, which means that $v$ is an affine function.
Using again \eqref{cond:1} we reach that $v\equiv 0$, which contradicts \eqref{cond:2}.
Therefore, the estimate \eqref{eq: elliptic_contradiction_estimate} must hold.
\end{proof}

Finally, we provide the:

\begin{proof}[Proof of  \autoref{th: calderon-zygmund theorem}]
We split the proof into two steps.

    \textbf{Step 1.} We first prove the result for $2<p<\infty$.

Let $u$ be a solution in $B_{1}$ from $\Delta u = f$, with $f\in L^{p}(B_{1})$. Let $\eta\in C^{\infty}_{c}(B_{1})$ be any smooth function with $\eta\geq0$ and $\int_{B_{1}}\eta = 1$ and $\eta_{\varepsilon} = \varepsilon^{-n}\eta(x/{\varepsilon})$.
    
Then, the function $u_{\varepsilon}(x) = u*\eta_{\varepsilon}(x)$ is $C^{\infty}$ and satisfies
    \begin{equation*}
        \Delta u_{\varepsilon} = f*\eta_{\varepsilon} =: f_{\varepsilon} \quad \textrm{in} \quad B_{1-\varepsilon}. 
    \end{equation*}
    Since $u_{\varepsilon}$ is $C^{\infty}$ we can use the estimates \eqref{estimate: lp_norms_maximal}, \eqref{p=2}, and \autoref{prop: elliptic_estimate} (rescaled) to get:
    \begin{multline*}
        \|D^2u_{\varepsilon}\|_{L^p(B_{1/2})} \;\leq\; C\big(\|\mathcal{M}_{2}^{\#}D^2u_{\varepsilon}\|^{1/2}_{L^{p/2}(B_{1/2})} + \|D^2u_{\varepsilon}\|_{L^1(B_{3/4})}\big) \\
        \leq\;  C\big(\|u_{\varepsilon}\|_{L^{2}(B_{1-\varepsilon})} + \|f_{\varepsilon}\|_{L^{2}(B_{1-\varepsilon})} + \|\mathcal{M}_{2}^{\#} f_{\varepsilon}\|^{1/2}_{L^{p/2}(B_{1/2})}\big).\\
        \leq\;  C\big(\|u_{\varepsilon}\|_{L^{2}(B_{1-\varepsilon})} + \|f_{\varepsilon}\|_{L^{p}(B_{1-\varepsilon})}\big).
    \end{multline*} 
    
    Thanks to {Young's convolution inequality} for $L^{p}$ norms we have $\|u_{\varepsilon}\|_{L^{2}(B_{1-\varepsilon})}\leq \|u\|_{L^{2}(B_{1})}$ and also $\|f_{\varepsilon}\|_{L^{p}(B_{1-\varepsilon})}\leq\|f\|_{L^{p}(B_{1})}$.
    The result then follows by letting $\eps\to0$ and using the lower semicontinuity of the $W^{2,p}$ norm.

\vspace{2mm} 

    \textbf{Step 2.} The result for $1<p<2$ then follows from Step 1 and a standard duality argument, which we sketch next. 
First, we prove the estimate 
\[\|D^2\bar u\|_{L^p(B_1)} \leq C\|f\|_{L^{p}(B_1)},\qquad 1<p<2\]
for any $f\in C^{\infty}_c(B_1)$, where $\bar u\in C^\infty(\R^n)$ is the solution of $\Delta \bar u = f$ given by the fundamental solution of the Laplacian.

Recall that the $L^p$-norm of a function $f$ can be characterized by
    \begin{equation*}
        \|f\|_{L^p(B_1)} = \sup_{g\in C^\infty_c(B_1)}\left\{\int_{\R^n} fg : \|g\|_{L^{p'}(B_1)}=1\right\}.
    \end{equation*}
    Given $\bar u$ as above, consider a test function $g\in C^{\infty}_c(B_1)$ and let $v\in C^\infty(\R^n)$ be the solution of $\Delta v = g$ given by the fundamental solution of the Laplacian. Then, integrating by parts we get
    \begin{equation*}
        \int_{\mathbb{R}^n} D^2\bar u\, g = \int_{\mathbb{R}^n} f \,D^2v = \int_{B_1} f \,D^2v \leq \|f\|_{L^p(B_1)} \|D^2v\|_{L^{p'}(B_1)} \leq C \|f\|_{L^p(B_1)} \|g\|_{L^{p'}(B_1)},
    \end{equation*}
where the last inequality $\|D^2v\|_{L^{p'}(B_1)} \leq C\|g\|_{L^{p'}(B_1)}$ follows from Step 1, since $\|g\|_{L^{p'}(B_2)}=\|g\|_{L^{p'}(B_1)}$ and $p'>2$.

Taking the supremum one reaches $\|D^2\bar u\|_{L^p(B_1)} \leq C \|f\|_{L^p(B_1)}$, as claimed.
Moreover, by the same approximation argument as in Step 1, the same inequality holds for any weak solutions of $\Delta \bar u=f$ in $\R^n$, with $f\in L^p(B_1)$.

Finally, if $\Delta u=f$ in $B_1$, we consider the global solution $\bar u$ of $\Delta \bar u=f\chi_{B_1}$ in $\R^n$ (given by the fundamental soluton), and then $u-\bar u$ is harmonic in $B_1$. Combining interior regularity estimates for harmonic functions in $B_1$, with the fact that $\|D^2\bar u\|_{L^p(B_1)} \leq C \|f\|_{L^p(B_1)}$, the result follows.
\end{proof}

\begin{remark}
Once we have the $W^{2,p}$ estimates for the Laplacian, one can prove by the method of ``freezing coefficients'' that the same result holds for general operators in non-divergence form with continuous coefficients, i.e.,
\begin{equation*}
    tr\big(A(x)D^2u(x)\big) = \sum_{i,j=1}^{n}a_{ij}(x)\partial_{ij}u(x) = f(x) \quad \textrm{in}\quad B_1,
\end{equation*}
where $A(x) = (a_{ij}(x))_{i,j}$ is uniformly elliptic and continuous; see e.g. \cite{GT,FR22}. 
\end{remark}

\section{Parabolic Equations}
\label{sec:parabolic-estimate}

Our method can also be used to give a new proof for the following parabolic $W^{2,p}$ estimate:

\begin{theorem}
\label{th: calderon-zygmund theorem parabolic}
    Let $1<p<\infty$ and $u$ be any weak solution of
    \begin{equation*}
        \partial_t u - \Delta u = f \; in \; Q_{1},
    \end{equation*}
    with $f\in L^{p}(Q_{1})$. Then $D^2u$ and $u_t$ are in $L^p$ inside $Q_{1}$ and the following estimate holds:
    \begin{equation}
        \int_{Q_{1/2}}|D^{2}u|^{p} + |\partial_t u|^{p} \leq C\left(\int_{Q_1} |u|^{p} +\int_{Q_1}|f|^{p}\right),
        \label{eq: lp_parabolic_estimate}
    \end{equation}
where $C$ depends only on $n$.
\end{theorem}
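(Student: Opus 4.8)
\textbf{Proof plan for Theorem \ref{th: calderon-zygmund theorem parabolic}.}
The plan is to mirror the elliptic argument step by step, with the parabolic cylinders $Q_r = B_r \times (-r^2,0)$ (or the symmetric version $B_r\times(-r^2,r^2)$) replacing the Euclidean balls $B_r$, and the parabolic scaling $u_{k_m}(r_m x, r_m^2 t)$ replacing the elliptic dilation. First I would record the parabolic analogue of the $p=2$ energy estimate: for $u\in C^\infty_c(\R^{n+1})$ solving $\partial_t u - \Delta u = f$, one has $\|D^2 u\|_{L^2} + \|\partial_t u\|_{L^2} \le C\|f\|_{L^2}$, which follows by testing the equation against $\Delta u$ (and against $\partial_t u$) and integrating by parts in space-time; localizing with a cutoff gives $\|D^2 u\|_{L^2(Q_{1/2})} + \|\partial_t u\|_{L^2(Q_{1/2})} \le C(\|u\|_{L^2(Q_1)} + \|f\|_{L^2(Q_1)})$. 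I would also state the parabolic Fefferman--Stein inequality for the cylinder-based sharp maximal function $\mathcal M_2^\# w(z) = \sup_{r>0}\fint_{Q_r(z)}|w - \overline{w}_{Q_r(z)}|^2$, which gives the two-sided bound \eqref{estimate: lp_norms_maximal} with $B_r$ replaced by $Q_r$; this is standard once one works in the parabolic metric space with the parabolic "distance" and Lebesgue measure, which is a space of homogeneous type.

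Next I would prove the parabolic counterpart of Proposition \ref{prop: elliptic_estimate}: for $u,f\in C^\infty(Q_1)$ with $\partial_t u - \Delta u = f$ and $z\in Q_{1/2}$,
\[
\mathcal M_2^\#\big(D^2 u,\partial_t u\big)(z) \le C\big(\|u\|_{L^2(Q_1)}^2 + \|f\|_{L^2(Q_1)}^2 + \mathcal M_2^\# f(z)\big),
\]
where now the sharp maximal function is applied to the full vector $(D^2u,\partial_t u)$. The proof is the same contradiction-and-compactness scheme: assume it fails, normalize, invoke the (verbatim) parabolic version of Lemma \ref{lemma: sequences theta} to extract radii $r_m\to 0$ and a subsequence along which the oscillation at scale $r_m$ is nearly maximal, and define blow-ups $v_m(x,t) = \big(u_{k_m}(r_m x, r_m^2 t) - p_m(x,t)\big)/(r_m^2 \Theta^{1/2}(r_m))$, where $p_m$ is now a parabolic quadratic polynomial (linear combinations of $1$, $x_i$, $x_ix_j$ and $t$) chosen so that the averages over $Q_1$ of $v_m$, $\nabla v_m$, $D^2 v_m$ and $\partial_t v_m$ all vanish; the forcing rescales to $g_m(x,t) = (f_{k_m}(r_m x, r_m^2 t) - \overline{f_{k_m}}_{Q_{r_m}})/\Theta^{1/2}(r_m) \to 0$ in $L^2_{\mathrm{loc}}$. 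The same averaged-oscillation bookkeeping gives $\|(D^2v_m,\partial_t v_m)\|_{L^2(Q_R)}^2 \le C(R)$ and $\|(D^2v_m,\partial_t v_m)\|_{L^2(Q_1)}^2 = \tfrac12$, and the $p=2$ estimate forces $\|v_m\|_{L^2(Q_2)} \ge c_\circ > 0$. Passing to a limit $v$ (strong $L^2_{\mathrm{loc}}$ convergence of $v_m$ and $\nabla v_m$ via parabolic compactness, weak $L^2_{\mathrm{loc}}$ for $D^2 v_m$ and $\partial_t v_m$) one gets a caloric function $\partial_t v - \Delta v = 0$ in $\R^{n+1}$ with $D^2 v$ and $\partial_t v$ of polynomial growth; interior estimates for the heat equation then show $D^2 v$ and $\partial_t v$ are caloric and grow polynomially, hence are polynomials (a Liouville theorem for the heat equation), and the vanishing-average normalization plus the oscillation bound \eqref{cond:4}-analogue forces these polynomials to be zero. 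Thus $v$ is affine in $x$ and constant in $t$, and the normalization of the averages gives $v\equiv 0$, contradicting $\|v\|_{L^2(Q_2)}\ge c_\circ$.

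With the pointwise estimate in hand, the proof of Theorem \ref{th: calderon-zygmund theorem parabolic} for $2<p<\infty$ follows exactly as in the elliptic case: mollify $u$ in space-time, apply the parabolic \eqref{estimate: lp_norms_maximal}, the $p=2$ estimate, and the parabolic Proposition to the mollifications, use Young's convolution inequality, and let the mollification parameter go to zero. The range $1<p<2$ follows by the same duality argument, integrating by parts in space-time: if $\partial_t u - \Delta u = f$ and $\partial_t v - \Delta v = g$ (or rather the backward-in-time adjoint equation $-\partial_t v - \Delta v = g$, which has the same $W^{2,p}$ theory after reversing time), then $\int D^2 u\, g = \int f\, D^2 v$ up to the adjoint bookkeeping, and the case $p' > 2$ already proved gives $\|D^2 v\|_{L^{p'}} \le C\|g\|_{L^{p'}}$; since $\partial_t u = f + \Delta u$, the bound on $\partial_t u$ in $L^p$ comes for free from the bound on $D^2 u$ and $f$. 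The main obstacle I anticipate is not any single deep step but the need to set up the parabolic machinery cleanly: making sure the parabolic sharp maximal function satisfies the Fefferman--Stein inequality (i.e. citing or checking that parabolic cylinders form a space of homogeneous type with the right doubling property), choosing the right class of "parabolic polynomials" in the blow-up so that the rescaled equation is preserved and all the relevant averages can be killed simultaneously, and invoking the correct parabolic compactness (Aubin--Lions type) to get strong convergence of $v_m$ and $\nabla v_m$; none of these is hard, but each must be stated carefully because the parabolic scaling is anisotropic.
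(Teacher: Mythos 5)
Your plan mirrors the paper's proof of Theorem~\ref{th: calderon-zygmund theorem parabolic} almost step for step: parabolic cylinders and parabolic scaling, the same blow-up contradiction argument leading to a caloric limit $v$ whose $D^2v$ and $\partial_t v$ are polynomials, the same mollification argument for $p>2$, and a space-time duality argument for $1<p<2$. One small but genuine difference is the duality step: you propose pairing $D^2u$ against $g$ and integrating by parts to reach $\int D^2u\,g = \int f\,D^2v$, then deducing $\partial_t u\in L^p$ from $\partial_t u = f + \Delta u$; the paper instead pairs $\partial_t u$ against the time-reversed source $\tilde g$, obtains $\int_\Omega \partial_t u\,\tilde g = -\int_\Omega f\,\partial_t\tilde v$, and recovers $D^2u\in L^p$ via $\Delta u = \partial_t u - f$ and elliptic regularity. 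Both work under the zero initial/terminal conditions you impose; yours requires one extra spatial integration by parts under the time integral but is equally clean.

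However, there is one point you gloss over that genuinely needs a separate argument in the parabolic setting. In the elliptic proof, the paper uses the mean value property of harmonic functions to show $\fint_{B_R} D^2v = \fint_{B_1}D^2v = 0$, so the oscillation bound becomes the plain $L^2$ bound $\fint_{B_R}|D^2v|^2\le 1$, from which a harmonic polynomial $D^2v$ is immediately seen to vanish. Caloric functions have no mean value property over parabolic cylinders, so this reduction is unavailable. To get from ``$D^2v$ and $\partial_t v$ are polynomials with bounded $L^2$-oscillation over $Q_R$ for all $R>1$'' to ``$D^2v$ and $\partial_t v$ are constants,'' the paper proves a separate elementary lemma (Lemma~\ref{lemma: polynomials}): for a polynomial $p$ of parabolic degree $N\ge 1$ one has $\fint_{Q_R}|p-c_R|^2 \ge cR^{2N}$ for all constants $c_R$ and large $R$. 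Your sentence ``the vanishing-average normalization plus the oscillation bound forces these polynomials to be zero'' silently assumes this growth estimate; you should state and prove the analogue of Lemma~\ref{lemma: polynomials} (decompose into parabolically homogeneous pieces and rescale), then conclude $N=0$, and only afterwards use the vanishing averages in~\eqref{cond:1 parabolic} to get $v\equiv 0$. The rest of your outline is sound, including the automatic compatibility of the normalization of $p_m$ with the rescaled equation.
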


Here,  $Q_r(x_\circ,t_\circ) = B_r(x_\circ) \times (t_\circ-\frac{r^2}{2},t_\circ+\frac{r^2}{2}]$ represents the parabolic cube. 
As before, the case $p=2$ follows from an easy integration by parts argument \cite{Ev10}, and we will give a new proof for $p\neq2$. 

\subsection*{Parabolic sharp maximal function}
We define the parabolic 2-sharp maximal function for any $w\in L^2(D)$ and any $Q_{r_\circ}(x,t)\subset D$ as
\begin{equation*}
    \cM^{\#}_{2,{\rm par}} w(x,t) = \sup_{r\in(0,r_\circ)}\fint_{Q_r(x,t)}|w-\overline{w}_{Q_r(x,t)}|^2,
\end{equation*}
for which the analogous bounds to \eqref{estimate: lp_norms_maximal} in $Q_r$ hold; see \cite{GM12}.
Here, $r_\circ>0$ is a fixed constant.

\subsection*{Regularity estimates}
In order to prove \autoref{th: calderon-zygmund theorem parabolic} we follow the steps as in the elliptic case. First, we will show an equivalent result for the non-homogeneous heat equation: 
\begin{proposition}
    \label{prop: parabolic_estimate}
    Let $u,f\in C^{\infty}(Q_1)$, with $u_t - \Delta u = f$, and $r_\circ=\frac18$. Then,
    \begin{equation}
    \cM^{\#}_{2,{\rm par}} \partial_t u(x,t) + \cM^{\#}_{2,{\rm par}}D^2u(x,t) \leq C\big(\|u\|^2_{L^2(Q_1)} + \|f\|^2_{L^2(Q_1)} +  \cM^{\#}_{2,{\rm par}}f(x,t)\big),
    \label{eq: parabolic_contradiction_estimate}
    \end{equation}
for any $(x,t)\in Q_{1/2}$, where $C$ depends only on $n$.
\end{proposition}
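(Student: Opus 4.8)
The plan is to mimic the elliptic proof of Proposition~\ref{prop: elliptic_estimate} line by line, replacing balls by parabolic cubes, the Laplacian by the heat operator, and harmonic functions by caloric ones (solutions of $\partial_t v - \Delta v = 0$). First I would fix the point $(x,t)=(0,0)$ and argue by contradiction: if \eqref{eq: parabolic_contradiction_estimate} fails, there are $u_k, f_k \in C^\infty(Q_1)$ with $\partial_t u_k - \Delta u_k = f_k$ normalized so that $\|u_k\|^2_{L^2(Q_1)} + \|f_k\|^2_{L^2(Q_1)} + \cM^\#_{2,\mathrm{par}} f_k(0,0) \leq 1$ while $\sup_k \sup_{r>0} \fint_{Q_r}|W_k - \overline{W_k}_{Q_r}|^2 \to \infty$, where $W_k := (\partial_t u_k, D^2 u_k)$ is the full vector of second-order parabolic derivatives. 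I would first record a parabolic version of Lemma~\ref{lemma: sequences theta}: with $\Theta_{\mathrm{par}}(r) := \sup_k \sup_{\rho \geq r} \fint_{Q_\rho}|W_k - \overline{W_k}_{Q_\rho}|^2$ one gets $r_m \to 0$ and a subsequence $k_m$ with $(1-\tfrac12)\fint_{Q_\rho}|W_k - \overline{W_k}_{Q_\rho}|^2 \leq \fint_{Q_{r_m}}|W_{k_m} - \overline{W_{k_m}}_{Q_{r_m}}|^2 =: \tfrac12\Theta(r_m)$ for all $k$ and all $\rho \geq r_m$; the proof is identical since only monotonicity in $r$ and the $L^2$ bound on $W_k$ are used.

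Next I would set up the blow-up. Using parabolic scaling $(x,t) \mapsto (r_m x, r_m^2 t)$, define $v_m(x,t) = \big(u_{k_m}(r_m x, r_m^2 t) - p_m(x,t)\big)/(r_m^2 \Theta^{1/2}(r_m))$ and $g_m(x,t) = \big(f_{k_m}(r_m x, r_m^2 t) - \overline{f_{k_m}}_{Q_{r_m}}\big)/\Theta^{1/2}(r_m)$, where $p_m$ is now a parabolic polynomial (quadratic in $x$, affine in $t$, i.e.\ a linear combination of $1, x_i, x_i x_j, t$) chosen so that $\overline{v_m}_{Q_1} = \overline{\nabla v_m}_{Q_1} = \overline{D^2 v_m}_{Q_1} = \overline{\partial_t v_m}_{Q_1} = 0$. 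One checks $\partial_t v_m - \Delta v_m = g_m$ in $Q_{1/(2r_m)}$. Exactly as in the elliptic case, the $\Theta$-inequality gives $\|W(v_m)\|^2_{L^2(Q_R)} \leq C(n,R)$ for every fixed $R>1$, gives $\|g_m\|^2_{L^2(Q_R)} \leq C(n,R)\, \cM^\#_{2,\mathrm{par}} f_{k_m}(0,0)/\Theta(r_m) \to 0$, gives the normalization $\|W(v_m)\|^2_{L^2(Q_1)} = \tfrac12$, and gives the oscillation bound $\fint_{Q_R}|W(v_m) - \overline{W(v_m)}_{Q_R}|^2 \leq 1$. Applying the parabolic $p=2$ estimate \eqref{eq: lp_parabolic_estimate} in $Q_2$ — which bounds $\|W(v_m)\|_{L^2(Q_1)}$ by $\|v_m\|_{L^2(Q_2)} + \|g_m\|_{L^2(Q_2)}$ — forces $\|v_m\|_{L^2(Q_2)} \geq c_\circ > 0$ for large $m$.

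Then comes the compactness-and-Liouville step. Since $W(v_m)$ is bounded in $L^2$ on parabolic cubes, the parabolic Poincaré inequality bounds $v_m$ in a suitable parabolic Sobolev space on compact sets; passing to a subsequence, $v_m \to v$ and $\nabla v_m \to \nabla v$ strongly in $L^2_{\mathrm{loc}}$, while $D^2 v_m \to D^2 v$ and $\partial_t v_m \to \partial_t v$ weakly in $L^2_{\mathrm{loc}}$, so $v$ is an entire caloric function with $\partial_t v - \Delta v = 0$ in $\R^n \times \R$, inheriting $\overline{v}_{Q_1} = \overline{\nabla v}_{Q_1} = \overline{D^2 v}_{Q_1} = \overline{\partial_t v}_{Q_1} = 0$, $\|v\|_{L^2(Q_2)} \geq c_\circ$, $\|W(v)\|^2_{L^2(Q_R)} \leq C(n)R^{2n}$, and $\fint_{Q_R}|W(v) - \overline{W(v)}_{Q_R}|^2 \leq 1$ by lower semicontinuity. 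Now $W(v)$ is itself caloric (differentiating the heat equation), and the parabolic mean value property / interior estimates for caloric functions give $|W(v)(x,t)| \leq C R^{-(n+2)/2}\|W(v)\|_{L^2(Q_R)} \leq C R^{(3n-2)/2}$ on $Q_{R/2}$, so $W(v)$ is a caloric function of polynomial growth, hence (parabolic Liouville) a caloric polynomial. Combined with $\fint_{Q_R} W(v) = 0$ for all $R$ and the oscillation bound $\fint_{Q_R}|W(v)|^2 \leq 1$, this caloric polynomial must vanish identically; therefore $v$ is affine in $x$ and constant in $t$, and the normalization conditions force $v \equiv 0$, contradicting $\|v\|_{L^2(Q_2)} \geq c_\circ$. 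This proves \eqref{eq: parabolic_contradiction_estimate}.

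The main obstacle is purely bookkeeping rather than conceptual: one must be careful that \emph{both} $\partial_t u$ and $D^2 u$ are controlled simultaneously and with the correct parabolic homogeneities (the factor $r_m^2$ in the definition of $v_m$ is dictated by the fact that $\partial_t$ and $D^2$ have the same parabolic order), that the space of "trivial" blow-up profiles $p_m$ consists of caloric polynomials linear in $t$ and quadratic in $x$, and that the Liouville and mean-value arguments used are the parabolic versions (for caloric functions), which are standard but must be cited correctly. One also needs the parabolic Poincaré inequality in the anisotropic form adapted to $Q_R$, and the parabolic analogue of the Calderón–Zygmund $p=2$ identity $\int |D^2 v|^2 + |\partial_t v|^2 = \int|\partial_t v - \Delta v|^2$ for $v \in C^\infty_c$; both are classical \cite{Ev10}. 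Once these ingredients are in place, the deduction of \autoref{th: calderon-zygmund theorem parabolic} from \autoref{prop: parabolic_estimate} follows verbatim as in the elliptic Step 1 (mollification plus \eqref{estimate: lp_norms_maximal}) for $2<p<\infty$, and by the same duality argument for $1<p<2$, now using that the formal adjoint of $\partial_t - \Delta$ is $-\partial_t - \Delta$, which enjoys the same estimates after reflecting time.
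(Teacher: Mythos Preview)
Your overall strategy matches the paper's exactly, and most of the steps (contradiction setup, parabolic blow-up, normalization of $v_m$ via a polynomial quadratic in $x$ and linear in $t$, compactness, and the polynomial-growth Liouville for caloric functions) are correct and essentially identical to what the authors do.

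There is, however, a genuine gap in your Liouville step. You write: ``Combined with $\fint_{Q_R} W(v) = 0$ for all $R$ and the oscillation bound $\fint_{Q_R}|W(v)|^2 \leq 1$, this caloric polynomial must vanish identically.'' The claim $\fint_{Q_R} W(v) = 0$ for all $R$ is where you are implicitly reusing the elliptic mean value property, and it does \emph{not} hold for caloric functions. For harmonic $w$ one has $\fint_{B_R} w = w(0)$, so $\fint_{B_1}w=0$ forces $\fint_{B_R}w=0$ for every $R$; but for caloric $w$ the uniform average over a parabolic cylinder $Q_R$ is not the value at the center (the caloric mean value formula involves weighted heat balls, not $Q_R$). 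For instance, $w(x,t)=x_1^2+2t$ is caloric and $\fint_{Q_R} w = cR^2$ with $c>0$, which depends on $R$. So from $\overline{W(v)}_{Q_1}=0$ you cannot conclude $\overline{W(v)}_{Q_R}=0$, and hence you cannot pass from the oscillation bound to $\fint_{Q_R}|W(v)|^2\leq 1$.

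The paper closes this gap differently: once $D^2v$ and $\partial_t v$ are known to be caloric polynomials (of some parabolic degree $N$), it invokes an elementary lemma (Lemma~\ref{lemma: polynomials}) stating that for any polynomial $p$ of parabolic degree $N$ and any constant $c_R$,
\[
\fint_{Q_R}|p-c_R|^2 \geq c\,R^{2N}\qquad\text{for large }R,
\]
which, combined with the oscillation bound $\fint_{Q_R}|W(v)-\overline{W(v)}_{Q_R}|^2\leq 1$, forces $N=0$. Then $D^2v$ and $\partial_t v$ are constants, and the normalizations $\overline{D^2v}_{Q_1}=\overline{\partial_t v}_{Q_1}=0$ kill them. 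Your argument can be repaired by inserting exactly this lemma in place of the mean-value claim.
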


\begin{proof}
    Following the same steps and calculations as in the elliptic case, we will see through contradiction that the result holds. Assume as before that $(x_\circ,t_\circ)=(0,0)$, and suppose that the estimate \eqref{eq: parabolic_contradiction_estimate} does not hold. Then, there exist a set of functions $u_k,f_k\in C^{\infty}(Q_1)$, such that $\partial_tu_k - \Delta u_k = f_k$ in $Q_1$ and:
    \begin{equation*}
    \cM^{\#}_{2,{\rm par}}\partial_tu_k(0,0) + \cM^{\#}_{2,{\rm par}}D^2u_{k}(x_0,t_0) > k(\|u_{k}\|^2_{L^2(B_1)} + \|f_{k}\|^2_{L^2(B_1)} +  \cM^{\#}_{2,{\rm par}}f_{k}(0,0)).
    \end{equation*}
Moreover we can take $\|u_{k}\|^2_{L^2(B_1)} + \|f_{k}\|^2_{L^2(B_1)} + \cM^{\#}_{2,{\rm par}}f(0,0)= 1$. 
Rewrite the inequality as:
    \begin{equation*}
        \sup_k \sup_{r\in(0,r_\circ)}\fint_{Q_r}\left[|D^2u_k(x) - \overline{D^2u_k}_{Q_r}|^2 + |\partial_t u_k(x) - \overline{\partial_t u_k}_{Q_r}|^2\right] = \infty,
    \end{equation*} 
    and exactly as in the elliptic case, there exist sequences $r_m\to0$ and $k_m$ such that the new functions
    \begin{align*}
        v_m(x) &= \frac{u_{k_m}(r_mx, r_m^2t) - p_m(x,t)}{r_m^2\Theta^{1/2}(r_m)}, & g_m(x)&=\frac{f_{k_m}(r_mx, r_m^2t)-\overline{f_{k_m}}_{Q_{r_m}}}{\Theta^{1/2}(r_m)}.
    \end{align*}
satisfy 
\[
  \fint_{Q_\rho}\big|D^2u_{k} - \overline{D^2u_{k}}_{Q_\rho}\big|^2+\big|\partial_t u_{k} - \overline{\partial_t u_{k}}_{Q_\rho}\big|^2 \leq 
2\fint_{Q_{r_m}}\big|D^2u_{k_m} - \overline{D^2u_{k_m}}_{Q_{r_m}}\big|^2+\big|\partial_t u_{k_m} - \overline{\partial_t u_{k_m}}_{Q_{r_m}}\big|^2
=: \Theta(r_m)
\]
for all  $k\in \mathbb N$, $\rho\geq r_m$.

    Here, $p_m(x,t)$ denotes a quadratic polynomial in $x$ and linear in $t$ such that $\overline{v_m}_{Q_1} = \overline{\nabla v_m}_{Q_1} = \overline{D^2v_m}_{Q_1} = \overline{\partial_t v_m}_{Q_1} = 0$.
    Notice once more that $\partial_tv_m - \Delta v_m = g_m$ in $Q_{1/r_m}$. 
    
    Through the same calculations as in the elliptic case, we find the following bounds:
    \begin{equation*}
    \|D^2v_m\|_{L^2(Q_R)}^2 + \|\partial_tv_m\|_{L^2(Q_R)}^2  \;\leq\; CR^{2+n}(1+R^{2+n}),
    \end{equation*}

    while for our $g_m$ we have
    \begin{equation*}
    \|g_m\|^2_{L^2(Q_R)} \leq \frac{R^{2+n}(1+R^{2+n})\cM^{\#}_{2,{\rm par}}f_{k_m}(0,0)}{\Theta(r_m)}\longrightarrow 0,
    \end{equation*}
    for any fixed $R\in(1,1/r_m)$.  Note as well that
    \begin{equation*}
    \|D^2v_m\|^2_{L^2(Q_1)} + \|\partial _t v_m\|^2_{L^2(Q_1)} \geq \frac12,
    \end{equation*}
    which leads to $\frac12 \leq \|D^2v_m\|_{L^2(Q_1)} + \|\partial_t v_m\|_{L^2(Q_1)} \leq C(\|v_m\|_{L^2(Q_2)} + \|g_m\|_{L^2(Q_2)})$ thanks to the case $p=2$. 
For a large enough $m$, it means that $\|v_m\|_{L^2(Q_2)}\geq c_\circ>0$. 
Finally, following the same steps, for any fixed $R\in(1,1/r_m)$ we have:
    \begin{equation*}
    \fint_{Q_R}\left[|D^2v_m(x,t)-\overline{D^2v_m}_{Q_R}|^{2} + |\partial_t v_m(x,t) - \overline{\partial_t v_m}_{Q_R}|^2\right] \leq 1.
    \end{equation*}

For any compact set $K\subset \R^n\times \R$, we have proved that the sequences $D^2v_m$ and $\partial_tv_m$ are bounded in $L^2(K)$ --for $m$ large enough so that $K\subset Q_{1/r_m}$.
Then, since $\overline{v_m}_{Q_1}= \overline{\nabla v_m}_{Q_1} =0$, it follows from the parabolic Poincar\'e inequality (see, e.g. \cite[Theorem 19]{Li03}) that $\|v_m\|_{L^2(K)}$ and $\|\nabla v_m\|_{L^2(K)}$ are bounded.
Hence, up to a subsequence, we have that $v_m\to v$ strongly in $L^2(K)$, while $D^2v_m \to D^2 v$,  $\partial_t v_m \to \partial_t v$ and $\nabla v_m\to \nabla v$ weakly in $L^2(K)$. 
Moreover, since $g_m\to 0$ strongly in $L^2(K)$, then we can pass the equation $\partial_t v_m-\Delta v_m=g_m$ in $K$ (in its weak formulation) to the limit to deduce that $v$ solves the heat equation in $K$.
Since this can be done for any compact set $K\subset\R^n\times\R$, we find
\[\partial_t v-\Delta v = 0 \quad \textrm{in}\quad \mathbb{R}^{n}\times\R.\]

 Taking limits we find that
    \begin{equation}
        \overline{v}_{Q_1} = \overline{\nabla v}_{Q_1} = \overline{D^2v}_{Q_1} = \overline{\partial_t v}_{Q_1} = 0,
        \label{cond:1 parabolic}
    \end{equation}
    \begin{equation}
        0 < \|v\|_{L^2(Q_2)},
        \label{cond:2 parabolic}
    \end{equation}
    \begin{equation}
        \|D^2v\|^2_{L^2(Q_R)} + \|\partial_t v\|^2_{L^2(Q_R)}\;\leq\; CR^{2+n}(1+R^{2+n}) \text{  for any  } R>1,
        \label{cond:3 parabolic}
    \end{equation}
    \begin{equation}
        \fint_{Q_R}\left[|D^2v(x,t)-\overline{D^2v}_{Q_R}|^{2} + |\partial_t v(x,t)-\overline{\partial_t v}_{Q_R}|^{2}\right] \;\leq\; 1 \text{  for any  } R>1.
        \label{cond:4 parabolic}
    \end{equation}

    By interior regularity for the heat equation, we can bound (see, e.g. \cite[Section~2.3.3.c]{Ev10})
    \begin{equation*}
        \max_{Q_{R/2}} \,|D^k_xD^l_t \,D^2v(x,t)| \leq \frac{C_{kl}}{R^{k+2l+n+2}}\|D^2v\|_{L^1(Q_R)},
    \end{equation*}
   which with condition \eqref{cond:3 parabolic} means that $D^2 v$ and $\partial_t v$ have to be polynomials of degree $N$ at most. By \autoref{lemma: polynomials}, this would mean that:
   \begin{equation*}
       \fint_{Q_R}\left[|D^2v(x,t)-\overline{D^2v}_{Q_R}|^{2} + |\partial_t v(x,t)-\overline{\partial_t v}_{Q_R}|^{2}\right] \geq cR^{2N}>0 \quad \text{  for any  } R>1.
   \end{equation*}
   which fulfils condition \eqref{cond:4 parabolic} only if $N=0$. This means that $D^2v$ and $\partial_t v$ are constants and that $v$ is a quadratic polynomial in space and linear in time. By condition \eqref{cond:1 parabolic} we reach that $v\equiv 0$ which is a contradiction, and therefore the estimate \eqref{eq: lp_parabolic_estimate} must hold.
\end{proof}

We used this elementary result.

\begin{lemma}
\label{lemma: polynomials}
Let $p(x,t)$ be a polynomial of parabolic degree $N$. 
Then, there exists $c>0$ such that
\begin{equation*}
    \fint_{Q_R}|p-c_R|^2 \geq cR^{2N}
\end{equation*}
for any large enough $R>1$ and any $c_R\in\R$.
\end{lemma}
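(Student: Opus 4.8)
The statement to prove is: if $p(x,t)$ is a polynomial of parabolic degree $N$, then $\fint_{Q_R}|p-c_R|^2 \geq cR^{2N}$ for large $R$, where presumably $c_R$ is the mean of $p$ over $Q_R$ (or equivalently the infimum over all constants, since the mean minimizes the $L^2$ distance to a constant). Here "parabolic degree $N$" means the degree where $x_i$ is weighted $1$ and $t$ is weighted $2$; a monomial $x^\alpha t^j$ has parabolic degree $|\alpha|+2j$.

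The plan is to use the parabolic scaling $x = Ry$, $t = R^2 s$, which maps $Q_R$ to $Q_1$. Under this change of variables, $\fint_{Q_R}|p(x,t)-c_R|^2\,dx\,dt = \fint_{Q_1}|p(Ry,R^2s)-c_R|^2\,dy\,ds$. Now write $p = \sum_{|\alpha|+2j\le N} a_{\alpha j} x^\alpha t^j$; then $p(Ry,R^2s) = \sum a_{\alpha j} R^{|\alpha|+2j} y^\alpha s^j$. Grouping by the parabolic degree $d=|\alpha|+2j$, we get $p(Ry,R^2s) = \sum_{d=0}^N R^d q_d(y,s)$, where $q_d$ is the parabolically-$d$-homogeneous part of $p$, and $q_N\not\equiv 0$ since $p$ has parabolic degree exactly $N$. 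Dividing by $R^N$, it therefore suffices to show $\fint_{Q_1}\big|q_N(y,s)+\sum_{d<N}R^{d-N}q_d(y,s)-R^{-N}c_R\big|^2\,dy\,ds \geq c > 0$ for large $R$; then multiplying back by $R^{2N}$ gives the claim. Since $c_R$ is the mean, we may replace $R^{-N}c_R$ by the mean of the bracketed polynomial, so the left side equals $\fint_{Q_1}|P_R - \overline{P_R}_{Q_1}|^2$ where $P_R(y,s) := \sum_{d\le N}R^{d-N}q_d(y,s) \to q_N$ uniformly on $Q_1$ as $R\to\infty$ (the lower-order terms carry negative powers of $R$).

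The key step is then a \emph{compactness/contradiction} argument on the coefficient space of polynomials of parabolic degree $\le N$, which is a finite-dimensional vector space. The map $q \mapsto \fint_{Q_1}|q-\overline q_{Q_1}|^2$ is a continuous seminorm on this space which vanishes precisely on constants. Since $P_R \to q_N$ in this finite-dimensional space and $q_N$ is a nonzero polynomial that is parabolically homogeneous of degree $N\geq 1$ — in particular $q_N$ is not a constant, as a nonzero homogeneous polynomial of positive degree is nonconstant — we have $\fint_{Q_1}|q_N-\overline{q_N}_{Q_1}|^2 =: 2c > 0$, and by continuity $\fint_{Q_1}|P_R-\overline{P_R}_{Q_1}|^2 \geq c$ for all $R$ large enough. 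Undoing the rescaling yields $\fint_{Q_R}|p-c_R|^2 \geq cR^{2N}$ for large $R$, as desired.

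The only mild subtlety — the main thing to get right rather than a genuine obstacle — is the bookkeeping that the lower-order homogeneous parts $q_d$ with $d<N$ come with strictly negative powers $R^{d-N}$ and hence disappear in the limit, so that the limiting object is exactly the top parabolic-homogeneous part $q_N$; combined with the trivial but essential observation that $q_N\not\equiv 0$ and is nonconstant (degree $\geq 1$), everything else is routine. One should also note the case $N=0$ is vacuous or trivial (the bound reads $\fint_{Q_R}|p-c_R|^2\geq c$, which fails for constant $p$; so implicitly $N\geq 1$, consistent with its use in the proof of Proposition via the hypothesis that eventually forces $N=0$ and a contradiction). I would state the lemma for $N\ge 1$.
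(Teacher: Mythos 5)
Your proof is correct and uses essentially the same ideas as the paper: decompose $p$ into its parabolically homogeneous parts, rescale to $Q_1$, and observe that the top-degree part $p_N$ is nonconstant (since $N\geq 1$) so that its $L^2$ deviation from constants on $Q_1$ is strictly positive. The only difference is cosmetic: where you pass to the limit $P_R\to p_N$ and invoke continuity of the map $q\mapsto\fint_{Q_1}|q-\overline q_{Q_1}|^2$ on the finite-dimensional polynomial space, the paper makes the same decay of lower-order terms explicit via the triangle inequality, bounding $\fint_{Q_R}|p-p_N|^2\leq CR^{2N-2}$ and combining with the scaling identity $\fint_{Q_R}|p_N-c_R|^2 = R^{2N}\fint_{Q_1}|p_N - c_R/R^N|^2\geq cR^{2N}$ to absorb the error for large $R$. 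Your remark that one should explicitly assume $N\geq1$ is a fair point that the paper leaves implicit.
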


\begin{proof}
    Write $p = p_0 + ... + p_N$, where each $p_i$ is a parabolically homogeneous polynomial of degree $i = 0, ..., N$. 
By triangle inequality, 
    \begin{equation*}
        \frac{1}{2}|p_N-c_R|^2 - |p-p_N|^2 \leq |p-c_R|^2 \leq 2|p-p_N|^2 + 2|p_N-c_R|^2.
    \end{equation*}

   Notice that $p-p_N$ is of degree $N-1$ and therefore $\fint_{Q_R}|p-p_N|^2 \leq CR^{2N-2}$. On the other hand, simple computations lead to
   \begin{equation*}
       \fint_{Q_R}\left|p_N(x,t)-c_R\right|^2dxdt = \fint_{Q_1}\left|p_N(Ry,R^2s)-c_R\right|^2dyds = R^{2N}\fint_{Q_1}\left|p_N-\frac{c_R}{R^N}\right|^2 \geq c_*R^{2N}>0
   \end{equation*}
for any constant $c_R$, where 
\[c_*:= \inf_{\kappa\in\R}\fint_{Q_1}\left|p_N-\kappa\right|^2.\]
   
   Hence, for large enough $R>1$
   \begin{equation*}
      \fint_{Q_R}\left|p-c_R\right|^2 
      \geq \frac12\fint_{Q_R}\left|p_N-c_R\right|^2-\fint_{Q_R}\left|p-p_N\right|^2 
      \geq \frac12c_*R^{2N} -  CR^{2N-2}
      \geq cR^{2N},
   \end{equation*}
   and the Lemma follows.
\end{proof}

Finally, we sketch the:

\begin{proof}[Proof of \autoref{th: calderon-zygmund theorem parabolic}]
 The proof is essentially the same as in the elliptic case, the only nontrivial difference is in the duality argument.
Namely, once we have the result for $p>2$, we fix $1<p<2$ and proceed as follows.

Consider $\Omega = \mathbb{R}^n\times(0,1)$ and let $u\in C^\infty_c(\R^n\times [0,1])$ be such that $\partial_t u-\Delta u=f$ in $\Omega$, with $u(x,0)=0$.
For any $g\in C^{\infty}_c(\Omega)$ let $v$ be such that $\partial_t v - \Delta v = g$ with $v(x,0)=0$. 
Consider the same problem but backwards in time, i.e.,  $\tilde{g} = g(x, 1-t)$ as well as $\tilde{v} = v(x, 1-t)$ which satisfies $-\partial_t \tilde{v} - \Delta \tilde{v} = \tilde{g}$ and $\tilde{v}(x,1)=0$. 

Then, integrating by parts we find
    \begin{multline*}
        \int_{\Omega} \partial_t u\, \tilde{g} = -\int_{\Omega} \partial_t u \partial_t \tilde{v} - \int_{\Omega} \partial_tu\Delta \tilde{v} = -\int_{\Omega} (\partial_t u - \Delta u) \partial_t\tilde{v} + \left[\int_{\mathbb{R}^n}u\Delta \tilde{v}\right]_0^1 = -\int_\Omega f \partial_t \tilde v \leq \\
        \leq \|f\|_{L^{p}(\Omega)}\|\partial_t \tilde{v}\|_{L^{p'}(\Omega)} = C \|f\|_{L^p(\Omega)}\|\tilde{g}\|_{L^{p'}(\Omega)}.
    \end{multline*} 
    
The last inequality $\|\partial_t\tilde{v}\|_{L^{p'}(\Omega)} \leq C\|\tilde{g}\|_{L^{p'}(\Omega)}$ results since $p'>2$. Finally, take the supremum on the left hand side to reach the desired estimate for $\partial_t u$ is in $L^p(\Omega)$. By definition of $f$, it means that $\Delta u$ is bounded in $L^p(\Omega)$ as well, and therefore that $D^2u$ is in $L^p(\Omega)$. This  yields the result for $1<p<2$ in case of global solutions in $\R^n\times (0,1)$.

Finally, if $\partial_tu -\Delta u=f$ in $Q_1$, we consider the global solution $\bar u$ of $\partial_t \bar u-\Delta \bar u=f\chi_{Q_1}$ in $\R^n\times (0,1)$, and then $u-\bar u$ solves the heat equation in $Q_1$. Combining interior regularity estimates for caloric functions in $Q_1$, with the fact that $\|D^2\bar u\|_{L^p(\mathbb{R}^n\times (0,1))}+\|\partial_t\bar u\|_{L^p(\mathbb{R}^n\times (0,1))} \leq C \|f\|_{L^p(\mathbb{R}^n\times (0,1))}$, the result follows.
\end{proof}

\end{document}